\def\ps@pprintTitle{%
 \let\@oddhead\@empty
 \let\@evenhead\@empty
 \def\@oddfoot{\centerline{\thepage}}%
 \let\@evenfoot\@oddfoot}
\newcommand{\Var}{\mathrm{Var}}
\newtheorem{teo}{Theorem}[section]
\newtheorem{pro}{Proposition}[section]
\newtheorem{cor}{Corollary}[section]
\newtheorem{lem}{Lemma}[section]
\theoremstyle{definition}
\newtheorem{exa}{Example}[section]
\theoremstyle{remark}
\newtheorem{rem}{Remark}[section]
\journal{Statistics \& Probability Letters}
\begin{document}

\begin{frontmatter}

\title{Exact uniform modulus of continuity \\for $q$-isotropic Gaussian random fields\tnoteref{t1}}
\tnotetext[t1]{This work was partially supported by the grant BES-2016077051 from the \textit{Ministerio de Ciencia e Innovación},
Spain}
\author{Adrián Hinojosa-Calleja}
\ead{hinojosa.a@gmail.com}
\address{Facultat de Mamtmàtiques i Informàtica, Universitat de Barcelona, Gran Via de les Corts Catalanes, 585, 08007 Barcelona, Spain}

\begin{abstract}
We find sufficient conditions for the existence of an exact uniform modulus continuity for the class of $q$-isotropic Gaussian random fields introduced in \cite{hinojosa2021anisotropic}. We apply the result to a $d$-dimensional version of the $B^{\gamma}$ Gaussian processes defined in \cite{mocioalca2005skorohod}.
\end{abstract}

\begin{keyword}
Gaussian random fields; Global modulus of continuity; Strong local nondeterminism.
\MSC 60G15 \sep 60G17 \sep 60G60
\end{keyword}

\end{frontmatter}

\section{Introduction}

A gauge function is a strictly increasing continuous function $q:[0,T]\rightarrow\mathbb{R}_+$, $T>0$ satisfying $q(0)=0$. Fix $K$ a compact set of $\mathbb{R}^d$ and assume that $X=\{X(x), x\in K\}$ is a real valued Gaussian random field. We say that $X$ is \textit{$q$-isotropic} on $K$ if there exists a gauge function $q$ and positive finite constants $c,C$ that 
\begin{equation}\label{can}
cq(\vert x-\bar{x}\vert)\leq\mathfrak{d}_{x,\bar{x}}\leq Cq(\vert x-\bar{x}\vert), x,\bar{x}\in K,
\end{equation}
where $\mathfrak{d}_{x,\bar{x}}:=\Vert X(x)-X(\bar{x})\Vert_{L^2(\Omega)}$ is the canonical metric of $X$. If $X$ only satisfies the upper bound in \eqref{can} we say that it is \textit{$\hat{q}$-isotropic}. The simplest form of a gauge function $q$ is 
\begin{equation}\label{exgau}
q(\tau)= \tau^{\nu},\tau,\nu>0. 
\end{equation}
If $X$ is a $q$-isotropic Gaussian random field with $q$ as in \eqref{exgau} it is referred as \textit{isotropic}.

Theorem 4.1 in \cite{meerschaert2013fernique} establishes general criteria for Gaussian anisotropic processes (including the case of Gaussian isotropic processes) to have an exact uniform modulus of continuity. This result or similar approaches has been applied to the study of several Gaussian processes, e.g. the fractional Brownian sheet \cite{ayache2005asymptotic}, the stochastic heat equation \cite{tudor2017sample} and the stochastic wave equation \cite{lee2019local}. 

Recently in the literature, a variety of $q$-isotropic Gaussian random fields that are not simply isotropic (i.e. with $q$ different from \eqref{exgau}) has arisen \cite{herrell2020sharp},\cite{hinojosa2022hitting}, \cite{hinojosa2022linear},\cite{sanz2018systems}. This paper provides a first approach for studying sample path continuity properties of such kinds of processes.

In Section \ref{s2} we establish sufficient conditions for the existence of the exact uniform modulus continuity of $q$-isotropic Gaussian random fields. In Section \ref{s3} we apply such results to a $d$-dimensional version of the $B^{\gamma}$ Gaussian processes introduced in \cite{mocioalca2005skorohod}. We finish this work by suggesting open problems related to the study of solutions to stochastic partial differential equations.

\section{Exact global modulus of continuity}\label{s2}

This section aims to prove Theorem \ref{2t5.1} which provides conditions on a centered $q$-isotropic Gaussian random field (from now referred as $q$-Gaussian random field) for having an exact global modulus of continuity.

Let $X$ be a centered $\hat{q}$-Gaussian random field on a compact set $K$ of $\mathbb{R}^d$. Theorem 1.3.5 in \cite{adler2009random} implies that there exists a universal constant $C_0$ and positive random variable $\eta$ such that 
\begin{equation}\label{2e5.1}
\sup_{\genfrac{}{}{0pt}{}{x,\bar x \in K,} {\mathfrak{d}_{ x,\bar x}\leq\varepsilon}}\vert X(x)-X(\bar x)\vert\leq C_0\int_0^\varepsilon d\rho\sqrt{\log\left(\frac{\diameter_K}{q^{-1}(\rho)}\right)},\varepsilon\in(0,\eta),
\end{equation} 
where $\diameter_K:=\sup_{x,\bar{x}\in K}\vert x-\bar{x}\vert$ is the Euclidean diameter of $K$.

For any $x\in K$, we denote by $K(x_-)=\{\bar{x}\in K: \bar{x}_{l}\leq x_{l}, l=1,...,d\}$ the set of points in $K$ that are at the left of $x$. We will make use of the following local nondeterminism condition on $X$:

\begin{enumerate}[label=(LND)]
\item There exists a gauge function $q$ and a positive constant $c_1$ such that for all integers $n\geq 1$, and all $x\in K$, $x^1,...,x^n\in K(x_-),$
\begin{equation*}
E(\Var(X(x)\mid X(x^1),...,X(x^n)))\geq c_1 \sum_{l=1}^{d}\bigwedge_{ j=1}^ nq( x_l-x_l^j)^{2}.
\end{equation*}
\end{enumerate}

\begin{rem} Since $X$ has second order finite moments, by \cite{durrett2019probability}[Thm. 4.1.15],
\begin{align}\label{SLNDbis}
E(\Var(X(x)\mid X(x^1),...,X(x^n)))&= \min_{a\in\mathbb{R}^n}E\left(\left[X(x)-\sum_{j=1}^na_jX(x^j)\right]^2\right).
\end{align}
\end{rem}

We introduce a set of conditions for the gauge function $q$:

\begin{enumerate}[label=(q\arabic*)]
\item The map $\tau\mapsto q(\tau)\sqrt{-\log\tau}$ is non decreasing on a neighborhood of zero.
\item $\lim_{\tau\downarrow 0}q(\tau)\sqrt{-\log\tau}= 0. $
\item There exists a positive constant $C_1$ such that for any $\tau\in[0,T]$,
$$\int_0^{\tau} d\rho q(\rho)\left[\rho\sqrt{\log\left(\frac{T}{\rho}\right)}\right]^{-1}\leq C_1q(\tau)\sqrt{\log\left(\frac{T}{\tau}\right)}.$$
\end{enumerate}

\begin{exa}\label{ex1} We analyze the conditions above for some examples of gauge functions.
\begin{enumerate}

\item $q(\tau)= \tau^{\nu}, \tau, \nu>0$. It is not hard to prove that conditions (q1) and (q2) are valid. We have that for any $T>0$
\begin{align}\label{lop}
\lim_{\tau\downarrow 0}\frac{\int_0^{\tau} d\rho q(\rho)\left[\rho\sqrt{\log\left(\frac{T}{\rho}\right)}\right]^{-1}}{q(\tau)\sqrt{\log\left(\frac{T}{\tau}\right)}}=0,
\end{align}
implying (q3).

\item $q(\tau) = \left( \log\frac{T}{\tau}\right)^{\gamma}\tau^{\nu}, \tau\in[0,T], \nu, \gamma>0$ and $T=e^{-\frac{\gamma}{\nu}}$. It is not hard to check conditions (q1) and (q2) are valid. Similarly to \eqref{lop}, by applying l'Hôpital's rule, one can prove (q3).

\end{enumerate}

\end{exa}

We are ready to state and prove the main result of this section. We follow a similar method than the proof of Theorem 4.1 in \cite{meerschaert2013fernique} (see also \cite{lee2019local}[Thm. 3.1]).

\begin{teo}\label{2t5.1} 
Let $q$ be a gauge function satisfying (q1), (q2), and (q3) with $T=\diameter_K$. Let $X$ be a centered $\hat{q}$-Gaussian random field on a compact box $K=[a,b], a,b\in\mathbb{R}^d$ with positive Lebesgue measure and satisfying (LND). Then
\begin{equation}\label{2e1}
\lim_{\varepsilon\downarrow 0} \sup_{\genfrac{}{}{0pt}{}{x,\bar x\in K,}{\mathfrak{d}_{x,\bar{x}}\leq \varepsilon}}\frac{\vert X(x)-X(\bar x)\vert}{\mathfrak{d}_{x,\bar{x}}\sqrt{\log\left(\frac{\diameter_K}{q^{-1}(\mathfrak{d}_{x,\bar{x}})}\right)}}= C\text{ a.s.}
\end{equation}
\end{teo}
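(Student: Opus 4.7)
The plan is to follow the template of Meerschaert--Wang--Xiao (Thm.~4.1 of \cite{meerschaert2013fernique}), adapted to the $q$-isotropic setting: establish matching upper and lower bounds for the ratio in \eqref{2e1}, use the monotonicity of the supremum in $\varepsilon$ to deduce that the limit exists in $(0,\infty)$ a.s., and finally invoke a zero-one argument to identify it as a deterministic constant.

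For the upper bound I would start from \eqref{2e5.1}, substitute $\rho=q(s)$ in the entropy integral and integrate by parts; the boundary term at $0$ vanishes by (q2), and (q3) applied at $\tau=q^{-1}(\varepsilon)$ bounds the remaining integral, producing
\[
\sup_{\mathfrak d_{x,\bar x}\le\varepsilon}|X(x)-X(\bar x)|\le C_0\bigl(1+\tfrac{C_1}{2}\bigr)\,\varepsilon\sqrt{\log(\diameter_K/q^{-1}(\varepsilon))},\qquad\varepsilon\in(0,\eta).
\]
A dyadic decomposition $\mathfrak d_{x,\bar x}\in(\varepsilon/2^{n+1},\varepsilon/2^n]$ transfers this bound to the ratio in \eqref{2e1}: via $\tau=q^{-1}(\mathfrak d)$, condition (q1) says that $\mathfrak d\mapsto\mathfrak d\sqrt{\log(\diameter_K/q^{-1}(\mathfrak d))}$ is non-decreasing near zero, so the dyadic loss in the denominator is absorbed into a universal factor and one obtains $\limsup_{\varepsilon\downarrow 0}(\cdot)<\infty$ a.s.

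The lower bound is where the real work lies and is the main obstacle. Pick a sequence $\varepsilon_n\downarrow 0$ slowly enough for a Borel--Cantelli argument (for instance $\varepsilon_n=n^{-\alpha}$ with $\alpha$ large), set $r_n=q^{-1}(\varepsilon_n)$, and place an axial grid $x_n^k=(a_1+kr_n,c_2,\dots,c_d)\in K$ for $k=0,\dots,N_n$ with $N_n\asymp\diameter_K/r_n$. Because $x_n^0,\dots,x_n^{k-1}\in K(x_n^{k-})$, (LND) together with the Gaussianity of $X$ gives the deterministic bound
\[
\Var\!\bigl(X(x_n^k)\mid X(x_n^0),\dots,X(x_n^{k-1})\bigr)\ge c_1 q(r_n)^2=c_1\varepsilon_n^2,
\]
so the orthogonal-projection innovations $Z_{n,k}:=X(x_n^k)-\sum_{j<k}a_{kj}^{(n)}X(x_n^j)$ form an \emph{independent} family of centred Gaussians with variances $\ge c_1\varepsilon_n^2$. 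A standard Gaussian-maximum estimate yields, for $\lambda<\sqrt{2c_1}$,
\[
P\!\Bigl(\max_{1\le k\le N_n}|Z_{n,k}|\le\lambda\varepsilon_n\sqrt{\log N_n}\Bigr)\le\exp\bigl(-N_n^{1-\lambda^2/(2c_1)}\bigr),
\]
which is summable in $n$ since $N_n$ grows polynomially; Borel--Cantelli then gives $\max_k|Z_{n,k}|\ge c\varepsilon_n\sqrt{\log(\diameter_K/q^{-1}(\varepsilon_n))}$ a.s.\ for all large $n$. The delicate last step is to translate this maximum of innovations into a maximum of genuine pair-differences of the field with $\mathfrak d_{x,\bar x}\le C\varepsilon_n$; one clean route is to run the construction on disjoint sub-boxes of $K$, pick two neighbouring grid points inside each box, and apply Sudakov minoration to the $\asymp N_n$ pair-differences (whose pairwise $L^2$-separation is $\ge c\varepsilon_n$ by (LND) with $n=1$). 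This yields $\liminf_{\varepsilon\downarrow 0}(\cdot)>0$ a.s.

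The supremum in \eqref{2e1} is non-increasing in $\varepsilon$, so the matching bounds force the limit to exist in $(0,\infty)$ a.s.; measurability of the limit with respect to the tail $\sigma$-algebra of the fine-scale innovations, combined with Kolmogorov's $0$--$1$ law for Gaussian processes, then identifies it as a deterministic constant $C$.
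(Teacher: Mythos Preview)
Your upper-bound argument is essentially the paper's Proposition~\ref{2p5.1}.

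The lower bound has a genuine gap at precisely the step you flag as delicate. You correctly note that the innovations $Z_{n,k}=X(x_n^k)-E[X(x_n^k)\mid\mathcal F_{k-1}]$ are independent centred Gaussians with variance $\ge c_1\varepsilon_n^2$ by (LND), and you obtain a summable bound on $P(\max_k|Z_{n,k}|\le\lambda\varepsilon_n\sqrt{\log N_n})$. But $|Z_{n,k}|$ is not a pair-difference of the field: it differs from $X(x_n^k)-X(x_n^{k-1})$ by an $\mathcal F_{k-1}$-measurable shift, and no pointwise inequality relates the two. Your proposed fix via Sudakov minoration is not the right tool here: Sudakov controls $E[\sup_k Y_k]$, not an almost-sure lower bound on $\max_k|Y_k|$, and the pairwise $L^2$-separation of the differences $Y_k=X(y_k)-X(z_k)$ over disjoint sub-boxes is not what (LND) delivers (it bounds $\Var(X(x)\mid X(x^1),\dots,X(x^n))$ for ordered configurations, not $\|Y_k-Y_j\|_{L^2}$). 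The paper closes this gap with a short lemma based on Anderson's inequality: conditioning on $\mathcal F_{k-1}$ and using that $(-x,x)$ is symmetric convex gives $P(|X(x_n^k)-X(x_n^{k-1})|<x\mid\mathcal F_{k-1})\le P(|Z_{n,k}|<x)$, and iterating yields
\[
P\Bigl(\max_{1\le k\le N_n}|X(x_n^k)-X(x_n^{k-1})|<x\Bigr)\le\prod_{k=1}^{N_n}P(|Z_{n,k}|<x),
\]
which connects your innovation estimate directly to the quantity you actually need. This Anderson step is the missing key lemma in your outline.

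Your zero-one argument is also underspecified: ``tail $\sigma$-algebra of the fine-scale innovations'' does not name a concrete i.i.d.\ sequence whose tail contains the limit. The paper makes this explicit by taking a series representation $\tilde X(x)=\sum_{j\ge0}\varphi_j(x)\xi_j$ with continuous $\varphi_j$ and i.i.d.\ standard normal $\xi_j$ (Marcus--Rosen), and checking that any finite partial sum contributes a ratio tending to zero, since $\bigl|\sum_{j\le n}(\varphi_j(x)-\varphi_j(\bar x))\xi_j\bigr|\le\bigl(\sum_{j\le n}|\xi_j|\bigr)\,\mathfrak d_{x,\bar x}$ and the denominator blows up. Hence the limit lies in the tail $\sigma$-field of $(\xi_j)_{j\ge0}$ and is a.s.\ constant by Kolmogorov's $0$--$1$ law.
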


\begin{rem} Although Theorem \ref{2t5.1} is stated in terms of a $\hat{q}$-Gaussian random field, any $\hat{q}$-Gaussian random field satisfying (LND) is $q$-Gaussian (see \eqref{SLNDbis}). 
\end{rem}

\begin{proof} Since $X$ is $\hat{q}$-Gaussian random field on $K$ its covariance function is continuous on $K^2$. Then, due to \cite{marcus2006markov}[Thm. 5.3.2] $X$ has a version given by
\begin{equation}\label{kar}
\tilde{X}(x)=\sum_{j=0}^\infty\varphi_j(x)\xi_j, x\in K
\end{equation}
where $(\varphi_j)_{j\in\mathbb{N}}$ are continuous functions on $K$, $(\xi_j)_{j\in\mathbb{N}}$ is an i.i.d. standard normal random variables sequence, and the sum in \eqref{kar} converges to $\tilde{X}$ in $L^2(\Omega)$. 

Let $$\tilde{X}_n(x)=\sum_{j=n}^\infty\varphi_j(x)\xi_j, x\in K. $$
We claim that for any $n\in\mathbb{N}$,
$$L:=\lim_{\varepsilon\downarrow 0} \sup_{\genfrac{}{}{0pt}{}{x,\bar x\in K,}{\mathfrak{d}_{x,\bar{x}}\leq \varepsilon}}\frac{\vert \tilde X(x)-\tilde X(\bar x)\vert}{\mathfrak{d}_{x,\bar{x}}\sqrt{\log\left(\frac{\diameter_K}{q^{-1}(\mathfrak{d}_{x,\bar{x}})}\right)}}= \lim_{\varepsilon\downarrow 0}  \sup_{\genfrac{}{}{0pt}{}{x,\bar x\in K,}{\mathfrak{d}_{x,\bar{x}}\leq \varepsilon}}\frac{\vert \tilde X_n(x)-\tilde X_n(\bar x)\vert}{\mathfrak{d}_{x,\bar{x}}\sqrt{\log\left(\frac{\diameter_K}{q^{-1}(\mathfrak{d}_{x,\bar{x}})}\right)}}. $$
The claim implies that $L$ is measurable with respect to the tail sigma field of $(\xi_j)_{j\in\mathbb{N}}$ and thus a.s. constant. This fact together with Propositions \ref{2p5.1} and \ref{2p5.2} below implies the theorem.

Indeed, by \eqref{kar}
$$\mathfrak{d}_{x,\bar{x}}^2=\sum_{j=0}^\infty(\varphi_j(x)-\varphi_j(\bar{x}))^2, \quad x,\bar{x}\in K.$$
Define
$$\tilde{Y}_n(x)=\sum_{j=0}^n\varphi_j(x)\xi_j=\tilde{X}(x)-\tilde{X}_{n+1}(x), \quad x\in K, $$
and note that
\begin{align*}
\vert \tilde{Y}_n(x)-\tilde{Y}_n(\bar{x})\vert\leq \left(\sum_{j=1}^n\vert\xi_j \vert\right)\bigvee_{j=0}^n\vert\varphi_j(x)-\varphi_j(\bar{x})\vert\leq \left(\sum_{j=1}^n\vert\xi_j \vert\right)\mathfrak{d}_{x,\bar{x}}.
\end{align*}
The last inequality and the fact that $q$ is a gauge function yields to
$$\lim_{\varepsilon\downarrow 0}  \sup_{\genfrac{}{}{0pt}{}{x,\bar x\in K,}{\mathfrak{d}_{x,\bar{x}}\leq \varepsilon}}\frac{\vert \tilde Y_n(x)-\tilde Y_n(\bar x)\vert}{\mathfrak{d}_{x,\bar{x}}\sqrt{\log\left(\frac{\diameter_K}{q^{-1}(\mathfrak{d}_{x,\bar{x}})}\right)}}\leq \left(\sum_{j=1}^n\vert\xi_j \vert\right)\lim_{\varepsilon\downarrow 0}\frac{1}{\sqrt{\log\left(\frac{\diameter_K}{q^{-1}(\varepsilon)}\right)}}=0.$$
The claim follows from the inequality above and 
\begin{align*}
\lim_{\varepsilon\downarrow 0} \sup_{\genfrac{}{}{0pt}{}{x,\bar x\in K,}{\mathfrak{d}_{x,\bar{x}}\leq \varepsilon}}&\frac{\vert \tilde X_n(x)-\tilde X_n(\bar x)\vert}{\mathfrak{d}_{x,\bar{x}}\sqrt{\log\left(\frac{\diameter_K}{q^{-1}(\mathfrak{d}_{x,\bar{x}})}\right)}}-\lim_{\varepsilon\downarrow 0} \sup_{\genfrac{}{}{0pt}{}{x,\bar x\in K,}{\mathfrak{d}_{x,\bar{x}}\leq \varepsilon}}\frac{\vert \tilde Y_{n-1}(x)-\tilde Y_{n-1}(\bar x)\vert}{\mathfrak{d}_{x,\bar{x}}\sqrt{\log\left(\frac{\diameter_K}{q^{-1}(\mathfrak{d}_{x,\bar{x}})}\right)}}\leq\\
&L\leq\lim_{\varepsilon\downarrow 0} \sup_{\genfrac{}{}{0pt}{}{x,\bar x\in K,}{\mathfrak{d}_{x,\bar{x}}\leq \varepsilon}}\frac{\vert \tilde X_n(x)-\tilde X_n(\bar x)\vert}{\mathfrak{d}_{x,\bar{x}}\sqrt{\log\left(\frac{\diameter_K}{q^{-1}(\mathfrak{d}_{x,\bar{x}})}\right)}}+\lim_{\varepsilon\downarrow 0} \sup_{\genfrac{}{}{0pt}{}{x,\bar x\in K,}{\mathfrak{d}_{x,\bar{x}}\leq \varepsilon}}\frac{\vert \tilde Y_{n-1}(x)-\tilde Y_{n-1}(\bar x)\vert}{\mathfrak{d}_{x,\bar{x}}\sqrt{\log\left(\frac{\diameter_K}{q^{-1}(\mathfrak{d}_{x,\bar{x}})}\right)}}.
\end{align*}
\end{proof}

We prove Propositions \ref{2p5.1} and \ref{2p5.2}, which establishes conditions for a $\hat{q}$-Gaussian random field to have a global modulus of continuity with a positive upper bound, and a positive lower bound, respectively. 

\begin{pro}\label{2p5.1} Let $X$ be a centered $\hat{q}$-Gaussian random field on $K$ a compact subset of $\mathbb{R}^d$, with $q$ a gauge function satisfying (q1), (q2), and (q3) with $T=\diameter_K$. Then, 
\begin{equation}\label{2e5.2}
\lim_{\varepsilon\downarrow 0} \sup_{\genfrac{}{}{0pt}{}{x,\bar x\in K,}{\mathfrak{d}_{x,\bar{x}}\leq \varepsilon}}\frac{\vert X(x)-X(\bar x)\vert}{\mathfrak{d}_{x,\bar{x}}\sqrt{\log\left( \frac{\diameter_K}{q^{-1}(\mathfrak{d}_{x,\bar{x}})}\right)}}\leq C_0(C_1+1)\text{ a.s.,}
\end{equation}
where $C_0$ and $C_1$ are the positive constants in \eqref{2e5.1} and (q3), respectively.
\end{pro}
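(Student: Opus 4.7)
The plan is to feed the Dudley-type estimate \eqref{2e5.1} into a change of variables plus an integration by parts, with condition (q3) supplying the decisive deterministic bound. I would work on the full-probability event $\{\eta>0\}$ and fix $\varepsilon<\eta$. For any pair $(x,\bar x)\in K^2$ with $0<\mathfrak{d}_{x,\bar x}\leq \varepsilon$, since the supremum in \eqref{2e5.1} is monotone in its radius, applying that inequality with $\mathfrak{d}_{x,\bar x}$ in place of $\varepsilon$ yields
$$|X(x)-X(\bar x)|\;\leq\; C_0\int_0^{\mathfrak{d}_{x,\bar x}} \sqrt{\log\left(\frac{\diameter_K}{q^{-1}(\rho)}\right)}\,d\rho.$$

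The heart of the argument is then the purely deterministic inequality
$$\int_0^{\delta} \sqrt{\log\left(\frac{\diameter_K}{q^{-1}(\rho)}\right)} d\rho \;\leq\; (C_1+1)\,\delta\sqrt{\log\left(\frac{\diameter_K}{q^{-1}(\delta)}\right)}$$
for $\delta$ sufficiently small. To derive it I would substitute $u=q^{-1}(\rho)$ and integrate by parts (in the Riemann--Stieltjes sense, should $q$ fail to be absolutely continuous). The boundary term at $u=0$ equals $\lim_{u\downarrow 0} q(u)\sqrt{\log(\diameter_K/u)}$ and vanishes thanks to (q2), while the boundary term at $u=q^{-1}(\delta)$ equals exactly $\delta\sqrt{\log(\diameter_K/q^{-1}(\delta))}$. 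The residual integral is
$$\frac{1}{2}\int_0^{q^{-1}(\delta)}\frac{q(u)}{u\sqrt{\log(\diameter_K/u)}}\,du,$$
which (q3), applied with $T=\diameter_K$ and $\tau=q^{-1}(\delta)$, bounds by $\frac{C_1}{2}\,\delta\sqrt{\log(\diameter_K/q^{-1}(\delta))}$.

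Combining the two bounds gives, for every eligible pair,
$$\frac{|X(x)-X(\bar x)|}{\mathfrak{d}_{x,\bar x}\sqrt{\log\left(\frac{\diameter_K}{q^{-1}(\mathfrak{d}_{x,\bar x})}\right)}} \;\leq\; C_0\!\left(1+\tfrac{C_1}{2}\right)\;\leq\; C_0(C_1+1),$$
and letting $\varepsilon\downarrow 0$ yields \eqref{2e5.2}. The main delicacy is the legitimacy of the integration by parts under the bare hypothesis that $q$ is strictly increasing and continuous; I would settle it either by a direct Stieltjes computation or by approximating $q$ by smooth non-decreasing gauges and passing to the limit. Condition (q1) does not appear in this upper-bound argument and is presumably saved for the matching lower bound in Proposition \ref{2p5.2}.
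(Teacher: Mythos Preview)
Your argument is correct and rests on the same deterministic integration-by-parts identity that the paper records as \eqref{2e5.3b}. The only methodological difference is in how \eqref{2e5.1} is exploited: the paper discretises along $\varepsilon_n=n^{-1}$, bounds the numerator on each shell $\{\varepsilon_{n+1}\le\mathfrak d_{x,\bar x}\le\varepsilon_n\}$ by $C_0\int_0^{\varepsilon_n}$, and then invokes (q1) to replace the denominator $\mathfrak d_{x,\bar x}\sqrt{\log(\diameter_K/q^{-1}(\mathfrak d_{x,\bar x}))}$ by its value at $\varepsilon_{n+1}$; you instead plug $\varepsilon=\mathfrak d_{x,\bar x}$ directly into \eqref{2e5.1} for every pair, which is legitimate since \eqref{2e5.1} holds for all $\varepsilon\in(0,\eta)$ simultaneously. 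Your route is shorter, yields the slightly sharper constant $C_0(1+C_1/2)$, and---as you note---does not use (q1) at all; the paper's discretisation buys nothing here and (q1) is genuinely only needed for the lower bound in Proposition~\ref{2p5.2}. The Stieltjes integration by parts is unproblematic: $q$ is continuous and increasing, hence of bounded variation, and $u\mapsto\sqrt{\log(\diameter_K/u)}$ is $C^1$ on $(0,\diameter_K)$, so the standard Riemann--Stieltjes formula applies without any approximation step.
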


\begin{proof}

By (q2) and (q3) (see \cite[(5)]{tindel2004sharp}), for any $\varepsilon>0$ small enough,
\begin{align}\label{2e5.3b}
\int_0^ {\varepsilon} d\rho\sqrt{\log\left(\frac{\diameter_K}{q^{-1}(\rho)}\right)}&=\varepsilon\sqrt{\log\left(\frac{\diameter_K}{q^{-1}(\varepsilon)}\right)}+\int_0^{q^{-1}(\varepsilon)}d\rho q(\rho)\left[2\rho\sqrt{\log\left(\frac{\diameter_K}{\rho}\right)}\right]^{-1}\notag\\
&\leq (C_1+1) \varepsilon\sqrt{\log\left(\frac{\diameter_K}{q^{-1}(\varepsilon)}\right)}.
\end{align}
Let $\varepsilon_n=n^{-1}$, (q1) implies that for any $n$ big enough,
\begin{align}\label{2e5.4}
\sup_{\genfrac{}{}{0pt}{}{x,\bar x\in K,}{\varepsilon_ {n+1}\leq\mathfrak{d}_{x,\bar{x}}\leq \varepsilon_n}}\frac{\vert X(x)-X(\bar x)\vert}{\mathfrak{d}_{x,\bar{x}}\sqrt{\log\left(\frac{\diameter_K}{q^{-1}(\mathfrak{d}_{x,\bar{x}})}\right)}}\leq \sup_{\genfrac{}{}{0pt}{}{x,\bar x\in K,}{\varepsilon_ {n+1}\leq\mathfrak{d}_{x,\bar{x}}\leq \varepsilon_n}}\frac{\vert X(x)-X(\bar x)\vert}{\varepsilon_{n+1}\sqrt{\log\left(\frac{\diameter_K}{q^{-1}(\varepsilon_ {n+1})}\right)}}.
\end{align}
By \eqref{2e5.1},\eqref{2e5.3b},\eqref{2e5.4} and (q1) we deduce that for any $n$ big enough
$$\sup_{\genfrac{}{}{0pt}{}{x,\bar x\in K,}{\varepsilon_ {n+1}\leq\mathfrak{d}_{x,\bar{x}}\leq \varepsilon_n}}\frac{\vert X(x)-X(\bar x)\vert}{\mathfrak{d}_{x,\bar{x}}\sqrt{\log\left(\frac{\diameter_K}{q^{-1}(\mathfrak{d}_{x,\bar{x}})}\right)}}\leq C_0(C_1+1),$$
which implies \eqref{2e5.2}.

\end{proof}

\begin{pro}\label{2p5.2} Let $X$ be a centered Gaussian random field on a compact box $K=[a,b]$, $a,b\in\mathbb{R}^d$ with positive Lebesgue measure and satisfying (LND) for a gauge function $q$. Assume that $q$ satisfies (q1). Then, there exists a finite positive constant $c_2$ depending on $c_1$ and $K$ that 
\begin{equation}\label{2e5.5}
\lim_{\varepsilon\downarrow 0} \sup_{\genfrac{}{}{0pt}{}{x,\bar x\in K,}{\mathfrak{d}_{x,\bar{x}}\leq \varepsilon}}\frac{\vert X(x)-X(\bar x)\vert}{\mathfrak{d}_{x,\bar{x}}\sqrt{\log\left(\frac{\diameter_K}{q^{-1}(\mathfrak{d}_{x,\bar{x}})}\right)}}\geq c_2\text{ a.s.}
\end{equation}
\end{pro}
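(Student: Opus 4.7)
The plan is to produce, at a sequence of scales $r_n\downarrow 0$, a family of $M_n$ Gaussian increments of $X$ along a one-dimensional section of $K$, to bound their conditional variances from below by $c_1 q(r_n)^2$ using (LND), and then to combine a Gaussian small-ball estimate with iterated conditioning and Borel--Cantelli to obtain, almost surely for all large $n$, at least one increment of order $q(r_n)\sqrt{\log(\diameter_K/r_n)}$. Concretely, take $r_n=2^{-n}$ and set $x^{n,k}=a+k r_n e_1\in K$ for $k=0,\dots,M_n$, with $M_n=\lfloor (b_1-a_1)/r_n\rfloor\asymp r_n^{-1}$. Define the increments $Y^{n,k}=X(x^{n,k+1})-X(x^{n,k})$ and the filtration $\mathcal{F}_k=\sigma(X(x^{n,0}),\dots,X(x^{n,k}))$. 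The conditioning points $x^{n,0},\dots,x^{n,k}$ belong to $K(x^{n,k+1}_-)$ and differ from $x^{n,k+1}$ only in the first coordinate, with minimum separation $r_n$, so (LND) applied at $x^{n,k+1}$ yields
$$\Var(Y^{n,k}\mid\mathcal{F}_k)=\Var(X(x^{n,k+1})\mid\mathcal{F}_k)\geq c_1 q(r_n)^2.$$

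Conditionally on $\mathcal{F}_k$, $Y^{n,k}$ is Gaussian with variance at least $c_1 q(r_n)^2$; its conditional mean may be nonzero but this can only enlarge tail probabilities. Fix $\alpha^2<2c_1$ and set $\lambda_n=\alpha q(r_n)\sqrt{\log(\diameter_K/r_n)}$. A standard Gaussian tail bound then produces a deterministic $p_n\geq c\,r_n^{\alpha^2/(2c_1)}/\sqrt{\log(1/r_n)}$ with $P(|Y^{n,k}|\geq\lambda_n\mid\mathcal{F}_k)\geq p_n$. Iterating the conditioning from the top index down,
$$P\!\left(\max_{0\leq k<M_n}|Y^{n,k}|<\lambda_n\right)\leq(1-p_n)^{M_n}\leq\exp(-p_n M_n),$$
and since $p_n M_n\gtrsim r_n^{\alpha^2/(2c_1)-1}/\sqrt{\log(1/r_n)}$ diverges polynomially, with $r_n=2^{-n}$ this bound is summable in $n$. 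Borel--Cantelli then yields $\max_k|Y^{n,k}|\geq\lambda_n$ almost surely for all $n$ large.

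To conclude, the maximizing pair $(x^{n,k_n},x^{n,k_n+1})$ has Euclidean distance $r_n$. Applying (LND) with a single conditioning point yields $\mathfrak{d}_{x^{n,k_n},x^{n,k_n+1}}\geq\sqrt{c_1}\,q(r_n)$, while the $\hat{q}$-isotropy hypothesis (available in the context of the ambient Theorem \ref{2t5.1}) gives the complementary upper bound $\mathfrak{d}\leq Cq(r_n)$; (q1) then forces $\log(\diameter_K/q^{-1}(\mathfrak{d}))$ to be of the same order as $\log(\diameter_K/r_n)$, so the ratio in \eqref{2e5.5} along this pair is bounded below by a positive constant $c_2$ depending only on $\alpha$, $c_1$, $C$ and $K$. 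The main obstacle is the calibration in the second paragraph: the Gaussian small-ball decay exponent $\alpha^2/(2c_1)$ must remain strictly below the combinatorial gain $1$ produced by $M_n\asymp r_n^{-1}$, which forces the restriction $\alpha<\sqrt{2c_1}$ and ultimately fixes the explicit value of $c_2$; a secondary subtlety is that the passage from $r_n$-scale quantities to $\mathfrak{d}$-scale quantities in the final step genuinely uses (q1).
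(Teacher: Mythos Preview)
Your proposal is correct and follows essentially the same route as the paper: discretize at dyadic scale $2^{-n}$ along a line in $K$, use (LND) to lower-bound the conditional variance of each successive increment, combine Anderson's inequality (your remark that a nonzero conditional mean ``can only enlarge tail probabilities'' is exactly this) with a Gaussian tail estimate and iterated conditioning to get $P(\max_k|Y^{n,k}|<\lambda_n)\le\exp(-p_nM_n)$, and conclude by Borel--Cantelli. The only cosmetic difference is that the paper runs the points along the main diagonal of $K$ rather than the $e_1$-axis, which yields a factor $d$ in the (LND) bound but changes nothing structurally; the paper also packages the conditioning step as a separate lemma (Lemma~\ref{2l5.1}).

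One point worth flagging: in your final paragraph you borrow the $\hat q$-isotropy upper bound $\mathfrak{d}\le Cq(r_n)$ from the ambient Theorem~\ref{2t5.1}, which is not among the hypotheses of Proposition~\ref{2p5.2} as stated. This is honest, and in fact the paper's own proof has the same hidden dependence: the passage \eqref{2e5.6} from the $\mathfrak{d}$-normalized supremum to $J_n$ (normalized by $f(\varepsilon_n)$) requires $f(\mathfrak{d}_{x^{n,j},x^{n,j-1}})\le C'f(\varepsilon_n)$, and (LND) alone gives only a \emph{lower} bound on $\mathfrak{d}_{x^{n,j},x^{n,j-1}}$. So your explicit invocation of $\hat q$-isotropy is not a defect relative to the paper; if anything it makes the dependence transparent. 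Note also that once you have $\mathfrak{d}\le Cq(r_n)$, monotonicity of $\rho\mapsto\rho\sqrt{\log(\diameter_K/q^{-1}(\rho))}$ from (q1) together with $q^{-1}(Cq(r_n))\ge r_n$ already gives $f(\mathfrak{d})\le C\,f(q(r_n))$ directly, so you do not actually need the two-sided comparability of the logarithms that you assert.
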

\begin{proof}
Let $\gamma=\bigwedge_{l=1}^d(a_l-b_l)>0$. For each $n\geq 1$, let $\varepsilon_n=q(2^{-n}\gamma)>0.$ For $j=0,1,\ldots, 2^n$, let $x_l^{n,j}=a_l+j\gamma 2^{-n}\in K$. (q1) implies that
\begin{align}\label{2e5.6}
\lim_{\varepsilon\downarrow 0}\sup_{\genfrac{}{}{0pt}{}{x,\bar x\in K,}{\mathfrak{d}_{x,\bar{x}}\leq \varepsilon}}\frac{\vert X(x)-X(\bar x)\vert}{\mathfrak{d}_{x,\bar{x}}\sqrt{\log\left(\frac{\diameter_K}{q^{-1}(\mathfrak{d}_{x,\bar{x}})}\right)}}&=\lim_{n\rightarrow\infty}\sup_{\genfrac{}{}{0pt}{}{x,\bar x\in K,}{\mathfrak{d}_{x,\bar{x}}\leq \varepsilon_n}}\frac{\vert X(x)-X(\bar x)\vert}{\mathfrak{d}_{x,\bar{x}}\sqrt{\log\left(\frac{\diameter_K}{q^{-1}(\mathfrak{d}_{x,\bar{x}})}\right)}}\notag\\
&\geq \liminf_{n\rightarrow\infty}J_n,
\end{align}
for
$$J_n= \max_{1\leq j\leq 2^n}\frac{\vert X(x^{n,j})-X(x^{n,j-1})\vert}{\varepsilon_n\sqrt{\log\left(\frac{\diameter_K}{q^{-1}(\varepsilon_n)}\right)}}, n\geq 1.$$
Let $C_*$ be a positive constant whose value will be determined later. Fix $n$ and write $x^{n,j}=x^j$ to simplify notations. By Lemma \ref{2l5.1} bellow
\begin{align}\label{2e5.7}
P(J_n\leq C_*)\leq\prod_{j=1}^{2^n} P\left(\frac{\vert X(x^{j})-E(X(x^{j})\vert \mathcal{F}_j )\vert}{\varepsilon_n\sqrt{\log\left(\frac{\diameter_K}{q^{-1}(\varepsilon_n)}\right)}}\leq C_*\right),
\end{align}
where $\mathcal{F}_j=\sigma (X(x^0),...,X(x^{j-1}))$. 

We claim that there exists a positive constant $C_2$ depending on $c_1,C_*,K$ such that for any $n\geq 1$  and $j=1,...,2^n$,
\begin{align}\label{2e5.8}
P\left(\frac{\vert X(x^{j})-E(X(x^{j})\vert \mathcal{F}_j )\vert}{\varepsilon_n\sqrt{\log\left(\frac{\diameter_K}{q^{-1}(\varepsilon_n)}\right)}}\leq C_*\right)\leq \exp\left(-C_2\frac{2^{-\frac{n C_*^2}{2}}}{\sqrt{n}}\right).
\end{align}
Before proving the claim, we explain why it implies the proposition. By \eqref{2e5.7} and \eqref{2e5.8},
\begin{align*}
P(J_n\leq C_*)&\leq \exp\left(-C_2\frac{2^{n(1- C_*^2/2)}}{\sqrt{n}}\right), n\geq 1.
\end{align*}
We can choose now $C_*$ to be a sufficiently small constant with $1-C_*^2/2>0$, implying that $\sum_{n=1}^\infty P(J_n\leq C_*)<\infty$. Hence, by the Borel-Cantelli lemma $\liminf_{n\rightarrow\infty} J_n\geq C_*$ and we deduce \eqref{2e5.5} by \eqref{2e5.6}.

We proceed to the proof of the claim. Indeed, by (LND),
\begin{align*}
E\left(\Var(X(x^{j})\vert \mathcal{F}_j)\right)\geq c_1\sum_{l=1}^d\bigwedge_{k=1}^{2^n}q^2(x^{2^n}_l-x^{k-1}_l)=c_1d\varepsilon_n^2.
\end{align*}
By the previous inequality,
\begin{align*}
P\left(\frac{\vert X(x^{j})-E(X(x^{j})\vert \mathcal{F}_j)\vert}{\varepsilon_n\sqrt{c_1 d\log\left(\frac{\diameter_K}{q^{-1}(\varepsilon_n)}\right)}} )\leq C_*\right)\leq P\left(\vert Z\vert \leq C_*\sqrt{\log\left(\frac{\diameter_K}{q^{-1}(\varepsilon_n)}\right)}\right)
\end{align*}
where $Z$ is a standard normal random variable. Using the inequalities,
$$P(\vert Z\vert>\tau)\geq(\sqrt{2\pi }\tau)^{-1}\exp(-\tau^2/2), \tau\geq 1\text{ and } 1-\tau\leq e^{-\tau},\tau>0,$$  we deduce that for $n$ large enough
\begin{align*}
P\left(\vert Z\vert \leq C_*\sqrt{\log\left(\frac{\diameter_K}{q^{-1}(\varepsilon_n)}\right)}\right)\leq\exp\left(-\left[C_*\sqrt{2\pi\log\left(\frac{\diameter_K}{q^{-1}(\varepsilon_n)}\right)\left(\frac{\diameter_K}{q^{-1}(\varepsilon_n)}\right)^{C_*^2}}\right]^{-1}\right).
\end{align*}
By writing the value of $\varepsilon_n$, we can prove that for $n$ big enough,
\begin{align*}
P\left(\vert Z\vert \leq C_*\sqrt{\log\left(\frac{\diameter_K}{q^{-1}(\varepsilon_n)}\right)}\right)&\leq\exp\left(-C(C_*,K)\frac{2^{-\frac{n C_*^2}{2}}}{\sqrt{n}}\right),
\end{align*}
implying \eqref{2e5.8}.
\end{proof}

We made use of the following lemma in the proof of Proposition \ref{2p5.2}:

\begin{lem}\label{2l5.1} Let $X=(X_0,X_1,...,X_n)$ be a centered Gaussian random vector. Then, for any $x>0$,
\begin{align*}
P(\max_{j=1,...,n}&\vert X_j-X_{j-1}\vert<x)\\
&\leq P(\max_{j=1,...,n-1}\vert X_j-X_{j-1}\vert< x)P(\vert X_n-E(X_n\vert \mathcal{F}_{n-1})\vert< x),
\end{align*}
where $\mathcal{F}_{n-1}=\sigma(X_0,...,X_{n-1})$.
\end{lem}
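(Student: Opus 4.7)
The plan is to exploit the Gaussian conditional structure and reduce the inequality to Anderson's inequality for centered unimodal densities. Since $(X_0,\ldots,X_n)$ is Gaussian, I can write
$$X_n = E(X_n\mid \mathcal{F}_{n-1}) + Z,$$
where $Z := X_n - E(X_n\mid\mathcal{F}_{n-1})$ is a centered Gaussian random variable independent of $\mathcal{F}_{n-1}$, with deterministic variance $\sigma^2 = \Var(X_n\mid\mathcal{F}_{n-1})$. Denoting $A_{n-1} := \{\max_{j\leq n-1}|X_j-X_{j-1}|<x\}$, which is $\mathcal{F}_{n-1}$-measurable, the first step is to condition on $\mathcal{F}_{n-1}$ and factor:
$$P\!\left(\max_{j\leq n}|X_j-X_{j-1}|<x\right) = E\!\left[\mathbf{1}_{A_{n-1}}\,P\!\left(|X_n-X_{n-1}|<x\,\middle|\,\mathcal{F}_{n-1}\right)\right].$$

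Next, I would rewrite the inner probability as
$$P\!\left(|Y+Z|<x \,\middle|\, \mathcal{F}_{n-1}\right), \qquad Y := E(X_n\mid\mathcal{F}_{n-1}) - X_{n-1},$$
where $Y$ is $\mathcal{F}_{n-1}$-measurable and $Z$ is independent of $\mathcal{F}_{n-1}$. Since the density of $Z$ is symmetric and unimodal about zero, Anderson's inequality (equivalently, the trivial fact that $\int_{-x-y}^{x-y}\varphi(t)\,dt \leq \int_{-x}^{x}\varphi(t)\,dt$ for any symmetric unimodal density $\varphi$ and any $y\in\mathbb{R}$) gives, pointwise in $\omega$,
$$P\!\left(|Y(\omega)+Z|<x\right) \leq P(|Z|<x) = P\!\left(|X_n - E(X_n\mid\mathcal{F}_{n-1})|<x\right).$$

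Plugging this uniform bound back yields
$$P\!\left(\max_{j\leq n}|X_j-X_{j-1}|<x\right) \leq P(|Z|<x)\cdot P(A_{n-1}),$$
which is exactly the claimed inequality. The only non-routine ingredient is the translation-reducing property of a centered Gaussian density over a symmetric interval; everything else is the Gaussian decomposition and the tower property. I do not anticipate a genuine obstacle, provided that Anderson's inequality (or a direct one-line computation with the error function) is invoked cleanly.
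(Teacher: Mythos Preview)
Your argument is correct and follows essentially the same route as the paper: condition on $\mathcal{F}_{n-1}$, apply Anderson's inequality to bound $P(|X_n-X_{n-1}|<x\mid\mathcal{F}_{n-1})$ by $P(|X_n-E(X_n\mid\mathcal{F}_{n-1})|<x)$, and use independence of $X_n-E(X_n\mid\mathcal{F}_{n-1})$ from $\mathcal{F}_{n-1}$ to factor. The only cosmetic difference is that you make the decomposition $X_n=E(X_n\mid\mathcal{F}_{n-1})+Z$ explicit and note that the one-dimensional Anderson step can be done by hand, whereas the paper cites the general convex-symmetric version.
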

\begin{proof}
We use the following version of Anderson's inequality \cite{anderson1955integral}[Cor. 2]: Let $ Y=(Y_1, ..., Y_n) $ be a centered Gaussian random vector and assume that $A\subset\mathbb{R}^{d}$ is convex and symmetric about the origin. Then,
$$
P(Y+a\in A)\leq P( Y\in A), a \in\mathbb{R}^{d}.
$$

The proof takes some ideas from Theorem 1.1 in \cite{shao2003gaussian}. By conditioning on $\mathcal{F}_{n-1}$,
\begin{align}\label{2e5.9}
P(\max_{j=1,...,n}&\vert X_j-X_{j-1}\vert<x)=E(\bm{1}(\{\max_{j=1,...,n-1}\vert X_j-X_{j-1}\vert<x\})P(\vert X_n-X_{n-1}\vert<x\vert \mathcal{F}_{n-1})).
\end{align}
Anderson's inequality implies that,
\begin{align}\label{2e5.10}
P(\vert X_n-X_{n-1}\vert<x\vert \mathcal{F}_{n-1})&\leq P(\vert X_n-E(X_n\vert\mathcal{F}_{n-1})\vert <x\vert \mathcal{F}_{n-1}).
\end{align}
The lemma follows by \eqref{2e5.9}-\eqref{2e5.10}, since $(X_j-X_{j-1})_{j=1,...,n-1}$ and $X_{n}-E(X_n \vert \mathcal{F}_{n-1})$ are independent.

\end{proof}

\section{Exact global modulus of continuity for the $q$-Brownian sheet}\label{s3}

Let $W=(W_1,...,W_d)$ be a $d$-dimensional standard Brownian motion on $\mathbb{R}_+$, assume that $q$ is a gauge function and $q^2$ is of class $C^1$ everywhere in $(0,T]$. Define the $q$-Brownian sheet as 
\begin{equation}\label{3e1}
B^{q}(x):=\prod_{l=1}^dB_l^{q}(x_l)=\prod_{l=1}^d\int_0^{x_l}K(x_l-y_l)dW_l(y_l), t\in[0,T], 
\end{equation}
where $K=\sqrt{\frac{dq^2}{d\tau}}.$ This Gaussian process was introduced in \cite{mocioalca2005skorohod} for $d=1$. As an example of an application of Theorem \ref{2t5.1}, this section is devoted to proof Theorem \ref{3t1} which establishes a uniform modulus of continuity for $B^q$.

The next proposition is a generalization of Proposition 1 in \cite{mocioalca2005skorohod}. We provide conditions on $q$ implying that $B^q$ is a $\hat{q}$-Gaussian random field. 

\begin{pro}\label{3p1} 
Let $B^q$ the $q$-sheet defined in \eqref{3e1}. Assume that $q^2$ is of class $C^2$ in $(0,T]$, and that $\frac{dq^2}{d\tau}$ is non-increasing. Then for any  $x,y\in[0,T]^d$,
\begin{align}
E(\left[B^{q}(x)-B^{q}(y)\right]^2)\leq \left[2d q^{2(d-1)}(T)\right]q^2(\vert x-y\vert).\label{3e2}
\end{align}
\end{pro}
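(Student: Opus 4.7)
My plan is to exploit the tensor product structure of $B^{q}$ and the mutual independence of the one-dimensional components $B_l^{q}$ (which follows from the independence of $W_1,\ldots,W_d$) to reduce the estimate to a one-dimensional computation plus a telescoping argument whose cross terms will turn out to have a definite sign.

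First I would establish two one-dimensional facts. For $s\leq t$, the decomposition
$$B_l^{q}(t)-B_l^{q}(s)=\int_0^s [K(t-u)-K(s-u)]\,dW_l(u)+\int_s^t K(t-u)\,dW_l(u)$$
combined with the It\^o isometry and the substitution $v=t-u$ (using $q^2(0)=0$) gives
$$E\bigl[(B_l^{q}(t)-B_l^{q}(s))^2\bigr]=\int_0^s [K(t-u)-K(s-u)]^2\,du + q^2(t-s).$$
The hypothesis that $K^2=dq^2/d\tau$ is non-increasing makes $K$ itself non-increasing and non-negative, so on $[0,s]$ one has $K(s-u)\geq K(t-u)\geq 0$, whence $[K(s-u)-K(t-u)]^2\leq K^2(s-u)-K^2(t-u)$. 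Integrating this bound and using that $q$ is increasing shows that the first integral is at most $q^2(t-s)$, yielding $E[(B_l^{q}(t)-B_l^{q}(s))^2]\leq 2\,q^2(|t-s|)$. The same monotonicity of $K$ also gives the covariance bound
$$r_l(s,t):=E[B_l^{q}(s)B_l^{q}(t)]=\int_0^{s\wedge t}K(s-u)K(t-u)\,du\leq \min\{q^2(s),q^2(t)\},$$
which I will use decisively below.

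For the $d$-dimensional bound, set $a_l:=B_l^{q}(x_l)$, $b_l:=B_l^{q}(y_l)$ and use the telescoping identity
$$B^{q}(x)-B^{q}(y)=\sum_{l=1}^d D_l,\qquad D_l:=\Bigl(\prod_{k<l}a_k\Bigr)(a_l-b_l)\Bigl(\prod_{k>l}b_k\Bigr).$$
Expanding $E[(\sum_l D_l)^2]=\sum_l E[D_l^2]+2\sum_{l<m}E[D_lD_m]$ and using the independence of $(a_l,b_l)$ across $l$, each diagonal term satisfies
$$E[D_l^2]=\Bigl(\prod_{k<l}q^2(x_k)\Bigr)E[(a_l-b_l)^2]\Bigl(\prod_{k>l}q^2(y_k)\Bigr)\leq 2\,q^{2(d-1)}(T)\,q^2(|x-y|),$$
by the one-dimensional increment bound, $x_k,y_k\in[0,T]$, and $|x_l-y_l|\leq |x-y|$ with $q$ increasing. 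Each cross term ($l<m$) again factors, by independence, as
$$E[D_lD_m]=\Bigl(\prod_{k<l}q^2(x_k)\Bigr)\bigl(q^2(x_l)-r_l\bigr)\Bigl(\prod_{l<k<m}r_k\Bigr)\bigl(r_m-q^2(y_m)\bigr)\Bigl(\prod_{k>m}q^2(y_k)\Bigr),$$
in which the three $\prod$-factors and $q^2(x_l)-r_l$ are non-negative (the last by the covariance bound), while $r_m-q^2(y_m)\leq 0$ by the same covariance bound; thus $E[D_lD_m]\leq 0$. Dropping the cross terms and summing the $d$ diagonal estimates yields exactly the claimed $[2d\,q^{2(d-1)}(T)]\,q^2(|x-y|)$.

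The main subtlety is precisely this sign analysis: it is what produces the sharp $2d$ constant, whereas a blunter step such as $(\sum_l D_l)^2\leq d\sum_l D_l^2$ would only give $2d^2$. The one-sided covariance inequality $r_l\leq\min(q^2(x_l),q^2(y_l))$, furnished by the monotonicity of $K$, is what makes the cross terms have the correct sign, and is the crucial ingredient that the hypothesis on $q^2$ buys us.
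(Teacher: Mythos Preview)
Your argument is correct and follows the same telescoping route as the paper: write $B^{q}(x)-B^{q}(y)=\sum_{l}D_l$ with $D_l=\bigl(\prod_{k<l}a_k\bigr)(a_l-b_l)\bigl(\prod_{k>l}b_k\bigr)$, bound each diagonal term via the one-dimensional increment estimate, and sum. The paper simply cites \cite{mocioalca2005skorohod}[Prop.~1] for the one-dimensional bound you reprove, and it records the inequality $E\bigl[(\sum_l D_l)^2\bigr]\le\sum_l E[D_l^2]$ (its display \eqref{3e3}) by appealing to ``the triangle inequality'' without further comment. Your sign analysis of the cross terms, using $r_k\le\min\{q^2(x_k),q^2(y_k)\}$ from the monotonicity of $K$, is exactly what justifies that step and hence the constant $2d$; a literal application of the $L^2$ triangle inequality followed by squaring would only produce $2d^{2}$. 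So your proof is essentially the paper's, but with this point made explicit.
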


\begin{proof} 
By \eqref{3e1}, the triangle inequality and Ito's isometry,
\begin{align}\label{3e3}
E(\left[B^{q}(x)-B^{q}(y)\right]^2)&\leq\sum_{l=1}^d\prod_{k=1}^{l-1}q^2(x_k) E(\left[B_l^{q}(x_l)-B_l^{q}(y_l)\right]^2)\prod_{k= l+1}^{d}q^2(y_k).
\end{align}
\cite{mocioalca2005skorohod}[Prop.1] implies that for $l=1,...,d$,
\begin{align}\label{3e4}
E(\left[B_l^{q}(x_l)-B_k^{q}(y_l)\right]^2)\leq 2q^2(\vert x_l-y_l\vert).
\end{align}
We deduce \eqref{3e2} by \eqref{3e3}, \eqref{3e4}, and the fact that $q$ is a gauge function.
\end{proof}

\begin{exa}\label{ex2} We analyze the hypotheses of Proposition \ref{3p1} for the gauge functions from Example \ref{ex1}.
\begin{enumerate}
\item $q(\tau)= \tau^{\nu},\tau,\nu>0$. $\frac{dq^2}{d\tau}$ is non-increasing in $\mathbb{R}^+$ if and only if $\nu \in(0,\frac{1}{2}]$, otherwise it is increasing.

\item $q(\tau) = \left( \log\frac{T}{\tau}\right)^{\gamma}\tau^{\nu},\tau\in[0,T],\nu,\gamma>0$ and $T=e^{-\frac{\gamma}{\nu}}$. We have that
\begin{align*}
\frac{d^2g^2}{dr^2}(\tau)&=2\tau^{2(\nu-1)}\left(\log\frac{T}{\tau}\right)^{2(\gamma-1)}\left[ \log\frac{T}{\tau}\left(\nu(2\nu-1)\log\frac{T}{\tau}+\gamma(1-4\nu)\right)+\gamma(2\gamma-1)\right],
\end{align*} 
implying that $\frac{dq^2}{dr}$ is non-increasing in  a small interval $(0,\bar{T}]\subset(0,T]$ if and only if $\nu \in(0,\frac{1}{2}]$, otherwise it is increasing.

\end{enumerate}

\end{exa}

The next proposition verifies that $B^q$ satisfies the local nondeterminism condition (LND):

\begin{pro}\label{3p2}  Let $B^q$ the $q$-Brownian sheet defined in \eqref{3e1}. Fix $0<t<T$, then for any $x\in[t,T]^d$, and all $x^1,...,x^n\in [t,T]^d(x_-),$
\begin{equation}\label{3e5}
E(\Var(B^q(x)\mid B^q(x^1),...,B^q(x^n)))\geq q^{2(d-1)}(t)\sum_{l=1}^d\bigwedge_{ j=1}^ nq^2( x_l-x_l^j).
\end{equation}
\end{pro}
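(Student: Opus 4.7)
The plan is to translate $E(\mathrm{Var}(B^q(x)\mid\cdot))$ into a squared $L^2$-distance via the It\^o isometry, and then bound it from below by integrating $K_x^2$ over a family of $d$ pairwise disjoint subsets of $[0,T]^d$, one associated with each coordinate direction.

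By the remark following (LND), $E(\mathrm{Var}(B^q(x)\mid B^q(x^1),\ldots,B^q(x^n)))=\min_a E\bigl[\bigl(B^q(x)-\sum_j a_j B^q(x^j)\bigr)^2\bigr]$. Using the independence of $W_1,\ldots,W_d$ and It\^o's isometry, one has $E[B^q(x)B^q(x')]=\langle K_x,K_{x'}\rangle_{L^2([0,T]^d)}$ with $K_x(y):=\prod_{l=1}^d K(x_l-y_l)\mathbf{1}_{[0,x_l]}(y_l)$. Letting $P$ denote orthogonal projection onto $\mathrm{span}\{K_{x^1},\ldots,K_{x^n}\}\subset L^2([0,T]^d)$, the quantity to be lower-bounded is $\|K_x-PK_x\|^2$. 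Since every $K_{x^j}$ is supported in $\prod_l[0,x_l^j]$, so is $PK_x$; hence for any Borel set $R\subset\prod_l[0,x_l]$ disjoint from $\bigcup_j\prod_l[0,x_l^j]$,
$$
\|K_x-PK_x\|^2\ \geq\ \int_R K_x^2\,dy.
$$

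Writing $M_l:=\bigvee_j x_l^j$ and $\delta_l:=x_l-M_l=\bigwedge_j(x_l-x_l^j)$, I would introduce the pairwise disjoint sets
$$
S_l:=\bigl\{y\in[0,T]^d:\ y_k\leq M_k\text{ for }k<l,\ M_l<y_l\leq x_l,\ 0\leq y_k\leq x_k\text{ for }k>l\bigr\},
$$
each of which is disjoint from $\prod_l[0,x_l^j]$ for every $j$, because on $S_l$ we have $y_l>M_l\geq x_l^j$. A Fubini factorisation together with $\int_0^a K^2(u)\,du=q^2(a)$ yields
$$
\int_{S_l}K_x^2\,dy\ =\ q^2(\delta_l)\prod_{k<l}\bigl[q^2(x_k)-q^2(\delta_k)\bigr]\prod_{k>l}q^2(x_k),
$$
and, by disjointness, $\|K_x-PK_x\|^2\geq \sum_l\int_{S_l}K_x^2\,dy$.

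The main obstacle is then to show that for every $l$ the prefactor satisfies
$$
\prod_{k<l}\bigl[q^2(x_k)-q^2(\delta_k)\bigr]\prod_{k>l}q^2(x_k)\ \geq\ q^{2(d-1)}(t).
$$
The right-hand product is immediate from $x_k\geq t$ and the monotonicity of $q$, giving $\prod_{k>l}q^2(x_k)\geq q^{2(d-l)}(t)$. The delicate factor is $\prod_{k<l}[q^2(x_k)-q^2(\delta_k)]\geq q^{2(l-1)}(t)$: since $x_k-\delta_k=M_k\geq t$ by the hypothesis $x^j\in[t,T]^d(x_-)$, this reduces to the super-additivity estimate $q^2(\delta_k+s)-q^2(\delta_k)\geq q^2(s)$ for $s\geq t$, which holds with equality in the standard Brownian case $q(\tau)=\tau^{1/2}$ and is the step requiring the most care under the standing hypotheses on $q$. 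Once this is granted, each $\int_{S_l}K_x^2\,dy\geq q^{2(d-1)}(t)q^2(\delta_l)$, and summing in $l$ yields
$$
\|K_x-PK_x\|^2\ \geq\ q^{2(d-1)}(t)\sum_{l=1}^d q^2(\delta_l)\ =\ q^{2(d-1)}(t)\sum_{l=1}^d\bigwedge_{j=1}^n q^2(x_l-x_l^j),
$$
which is \eqref{3e5}.
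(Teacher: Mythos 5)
Your reduction of $E(\Var(B^q(x)\mid\cdot))$ to $\Vert K_x-PK_x\Vert_{L^2([0,T]^d)}^2$ via the It\^o isometry, the lower bound $\int_R K_x^2$ over sets $R$ disjoint from every $\prod_k[0,x_k^j]$, and the Fubini evaluation of $\int_{S_l}K_x^2$ are all correct, and the disjointness of the staircase sets $S_l$ is fine. The genuine gap is exactly the step you flag at the end: the prefactor inequality $\prod_{k<l}\bigl[q^2(x_k)-q^2(\delta_k)\bigr]\geq q^{2(l-1)}(t)$, which you reduce to the super-additivity $q^2(\delta+s)-q^2(\delta)\geq q^2(s)$. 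This is not merely a step ``requiring care'': it is false in the regime this proposition is used. The standing hypothesis on $B^q$ (needed for Proposition \ref{3p1} and carried into Theorem \ref{3t1} and Corollary \ref{cor}) is that $\frac{dq^2}{d\tau}$ is non-increasing, i.e.\ $q^2$ is concave with $q^2(0)=0$, hence \emph{sub}-additive, so $q^2(\delta+s)-q^2(\delta)\leq q^2(s)$ --- the reverse of what you need. Concretely, for $q(\tau)=\tau^{\nu}$ with $\nu$ small, $x_k=T$ and $M_k=t$, one has $q^2(x_k)-q^2(\delta_k)=T^{2\nu}-(T-t)^{2\nu}\rightarrow 0$ as $\nu\downarrow 0$ while $q^2(t)=t^{2\nu}\rightarrow 1$, so the required bound fails badly for precisely the gauge functions of Corollary \ref{cor}. (It does hold, with equality, for the Brownian sheet $q^2(\tau)=\tau$, which is why the argument looks harmless at first sight.)

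The defect comes from shrinking the coordinates $k<l$ to $[0,M_k]$ in order to force disjointness. Two possible repairs. First, keep the full slabs $R_l=\{y\in[0,x]:y_l>M_l\}$; they are not disjoint, but each satisfies $\int_{R_l}K_x^2=q^2(\delta_l)\prod_{k\neq l}q^2(x_k)\geq q^{2(d-1)}(t)\,q^2(\delta_l)$, and since $\Vert K_x-PK_x\Vert^2\geq\max_l\int_{R_l}K_x^2\geq\frac{1}{d}\sum_l\int_{R_l}K_x^2$, you recover \eqref{3e5} with the constant $q^{2(d-1)}(t)/d$ --- enough to verify (LND) and hence Theorem \ref{3t1}, though not the constant stated in \eqref{3e5}. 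Second, the paper's own route restricts the coordinates $k\neq l$ to $[0,t]$ (the disjoint regions $D_l(x)$) and identifies the resulting independent piece with $B(t,\dots,t,x_l,t,\dots,t)-B(t,\dots,t)$, whose $d-1$ off-coordinate factors each contribute exactly $q^2(t)$ to the variance, before running your one-dimensional computation in the $l$-th variable; this is how the clean prefactor $q^{2(d-1)}(t)$ is obtained there. As written, your final step needs one of these repairs.
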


\begin{proof} We adapt the proof of \cite{khoshnevisan2007images}[Prop.42]. We relax the notation by writing $B$ instead of $B^q$. First, assume that $d=1$.  Let $x^1,...,x^n\in [t,T](x_-)$ with $x\in[t,T]^d$.  Without loss of generality we may and will assume that $x^1\leq x^2\leq ...\leq x^n$. By \eqref{3e1}, and the Ito's isometry, for any $a\in\mathbb{R}^n$, 
\begin{align}\label{3e6}
E\left(\left[B(x)-\sum_{j=1}^na_jB(x^j)\right]^2\right)&\geq \int_{x_n}^{x}[K(x-y)-\sum_{j=1}^na_jK(x^j-y)]^2dy\notag\\
&=\int_{x_n}^{x}K^2(x-y)dy=q^2(x-x^n).
\end{align}
\eqref{3e5} follows by \eqref{SLNDbis} and \eqref{3e6}.

Now, we assume that $d>1$. Fix $x\in[t,T]^d$ and decompose the rectangle $[0,x]$ in to the disjoint union
$$[0,t]\cup\bigcup_{l=1}^dD_l(x_l)\cup\Delta(t, x) $$
where $D_l(x)=\{y\in[0,x]: 0\leq y_i\leq t, i\neq l, t\leq y_l\leq x_l \}$ and $\Delta(t,x)$ is a union of $2^d-d-1$ rectangles of $[0,x]$. This implies that for all $ x\in[t, T]^d$, 
\begin{equation}\label{3e7}
B(x)= B(t)+\sum_{l=1}^dX_l(x)+B^{'}(t, x),
\end{equation} 
where,
$X_l(x)=\int_{D_l(x)}K(x-y)d  W(dy)$, $B^{'}(t, x)=\int_{\Delta(t, x)}K( x- y)d W(dy)$.
Since all the processes on the right-hand side of \eqref{3e7} are pairwise independent, for any $a\in\mathbb{R}^n$
\begin{align}\label{3e8}
E\left(\left[B(x)-\sum_{j=1}^na_jB(x^j)\right]^2\right)\geq\sum_{l=1}^dE\left(\left[X_l(x_l)-\sum_{j=1}^na_jX_l( x^j)\right]^2\right)
\end{align}
The proof of \eqref{3e5} finishes by a similar argument than \eqref{3e6}, using \eqref{3e1}, \eqref{3e8} and that
$$
X_l(x)=B(t,...,t,x_l,t,...,t)-B(t,...,t).
$$
\end{proof}

By Propositions \ref{3p1} and \ref{3p2}, of the hypotheses of Theorem \ref{2t5.1} are valid, implying Theorem \ref{3t1} bellow. Corollary \ref{cor} can be deduced by Examples \ref{ex1} and \ref{ex2}.

\begin{teo}\label{3t1} Let $B^q$ the $q$-Brownian sheet defined in \eqref{3e1}. Fix $0<t<T$, assume that that $q^2$ is of class $C^2$ in $(0,T]$, and that $\frac{dq^2}{d\tau}$ is non-increasing. If conditions (q1), (q2), and (q3) are satisfied, there exists a finite positive constant $C$ that
\begin{equation}\label{3mc}
\lim_{\varepsilon\downarrow 0} \sup_{\genfrac{}{}{0pt}{}{x,\bar x\in K,}{\mathfrak{d}_{x,\bar{x}}\leq \varepsilon}}\frac{\vert B^q(x)-B^q(\bar{x})\vert}{\mathfrak{d}_{x,\bar{x}}\sqrt{\log\left( \frac{\diameter_K}{q^{-1}(\mathfrak{d}_{x,\bar{x}})}\right)}}= C\text{ a.s.}
\end{equation}
\end{teo}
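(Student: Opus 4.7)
The plan is to apply Theorem \ref{2t5.1} with the compact box $K = [t,T]^d$, which has positive Lebesgue measure since $0 < t < T$. The three hypotheses of Theorem \ref{2t5.1} that must be verified are: (a) $q$ is a gauge function satisfying (q1)--(q3) with $T$ equal to $\diameter_K$; (b) $B^q$ is a centered $\hat{q}$-Gaussian random field on $K$; and (c) $B^q$ satisfies the local nondeterminism condition (LND).

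Condition (a) holds directly by the hypotheses (q1), (q2), (q3) assumed in the theorem. For (b), I would invoke Proposition \ref{3p1}: its assumptions ($q^2 \in C^2$ on $(0,T]$ and $\tfrac{dq^2}{d\tau}$ non-increasing) are granted by the hypotheses of Theorem \ref{3t1}, so
$$E\bigl([B^q(x) - B^q(y)]^2\bigr) \leq 2 d\, q^{2(d-1)}(T)\, q^2(|x-y|)$$
for all $x, y \in [0,T]^d$, hence on $K$. Taking square roots gives $\mathfrak{d}_{x,y} \leq \bar{C}\, q(|x-y|)$ with $\bar{C} = \sqrt{2d}\, q^{d-1}(T)$, which is the $\hat{q}$-isotropy upper bound in \eqref{can}. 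Centeredness follows because each factor $B_l^q(x_l)$ is a Wiener integral, and by the independence of $W_1, \ldots, W_d$, the product satisfies $E[B^q(x)] = \prod_l E[B_l^q(x_l)] = 0$. For (c), I would invoke Proposition \ref{3p2}, whose conclusion
$$E\bigl(\Var(B^q(x) \mid B^q(x^1), \ldots, B^q(x^n))\bigr) \geq q^{2(d-1)}(t) \sum_{l=1}^d \bigwedge_{j=1}^n q^2(x_l - x_l^j)$$
holds for all $x \in [t,T]^d = K$ and $x^1, \ldots, x^n \in [t,T]^d(x_-) = K(x_-)$. The constant $c_1 := q^{2(d-1)}(t)$ is strictly positive since $t > 0$ and $q$ is a gauge function, so (LND) is satisfied on $K$ with this $c_1$.

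With (a), (b), (c) all verified, Theorem \ref{2t5.1} applies directly to $B^q$ on $K$ and yields \eqref{3mc}. I do not anticipate a serious obstacle: the theorem is essentially a synthesis of the two preceding propositions with the abstract Theorem \ref{2t5.1}, and the only mild subtlety is identifying the intended compact box $K = [t,T]^d$, which is implicit in the hypothesis ``fix $0 < t < T$.'' One may also wish to note that the value of $T$ entering (q3) is formally $\diameter_K = \sqrt{d}\,(T-t)$ rather than the $T$ in the definition of $q$; since (q1), (q2) are local near zero and (q3) only differs by an inessential constant under such a rescaling, this causes no difficulty and only affects the identity of the final constant $C$.
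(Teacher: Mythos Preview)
Your proposal is correct and follows exactly the same approach as the paper: the paper's own argument consists of a single sentence invoking Propositions~\ref{3p1} and~\ref{3p2} to verify the hypotheses of Theorem~\ref{2t5.1}. Your write-up is in fact more careful than the paper's, since you make explicit the choice $K=[t,T]^d$, the centeredness of $B^q$, and the harmless discrepancy between $T$ and $\diameter_K=\sqrt{d}(T-t)$ in condition~(q3).
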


\begin{cor}\label{cor}
The $q$-Brownian sheet with $q(\tau) = \left( \log\frac{T}{\tau}\right)^{\gamma}\tau^{\nu},\tau\in[0,\tau_0],$ $\nu\in(0,\frac{1}{2}],$ $\gamma\geq 0,$ and $T=e^{-\frac{\gamma}{\nu}}$ satisfies the limit in \eqref{3mc}. If $\gamma=0$ the limit is valid for any $T>0$.
\end{cor}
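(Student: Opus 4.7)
The plan is to verify, for the explicit gauge $q(\tau)=(\log(T/\tau))^{\gamma}\tau^{\nu}$, that every hypothesis of Theorem \ref{3t1} is met, invoking the computations already carried out in Examples \ref{ex1} and \ref{ex2}; no new analysis is required, only a careful matching of parameters.

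First I would check $C^2$ smoothness: the function $q^2(\tau)=(\log(T/\tau))^{2\gamma}\tau^{2\nu}$ is manifestly $C^2$ on $(0,T]$ as a product of powers and powers of logarithms, so this hypothesis is automatic. Next, for the monotonicity of $dq^2/d\tau$, Example \ref{ex2}(2) has already computed the second derivative $d^2q^2/d\tau^2$ in closed form and identified that its sign on a neighborhood of $0$ is controlled by $\nu$; explicitly, for $\nu\in(0,\tfrac12]$, $dq^2/d\tau$ is non-increasing on some subinterval $(0,\bar T]\subset(0,T]$. I would then choose the compact box $K=[a,b]$ so that $\diameter_K\le \bar T$, which is what Proposition \ref{3p2} (and hence (LND)) requires.

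Third, conditions (q1)–(q3) are exactly what Example \ref{ex1}(2) verifies for this gauge, with the specific value $T=e^{-\gamma/\nu}$ chosen so that $q$ is increasing on $(0,T]$ and $q(T)>0$. Matching $T=\diameter_K$ (by rescaling or restricting $K$) plugs these checks into Theorem \ref{3t1} and yields \eqref{3mc} with some finite positive constant $C$.

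Finally, the special case $\gamma=0$: the gauge collapses to $q(\tau)=\tau^{\nu}$, Example \ref{ex1}(1) verifies (q1)–(q3) for every $T>0$, and Example \ref{ex2}(1) shows $dq^2/d\tau=2\nu\tau^{2\nu-1}$ is non-increasing on all of $\mathbb{R}_+$ precisely when $\nu\in(0,\tfrac12]$. Since there is no logarithmic factor anchoring the gauge to a specific value of $T$, the smallness restriction disappears and Theorem \ref{3t1} applies for any $T>0$. There is no genuine obstacle in any step; the whole content of the corollary is the parametric bookkeeping that identifies the two examples as instances of the hypotheses of Theorem \ref{3t1}.
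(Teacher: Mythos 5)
Your proposal is correct and follows essentially the same route as the paper, which deduces the corollary by combining Theorem \ref{3t1} with the verifications of (q1)--(q3) in Example \ref{ex1} and of the monotonicity of $\frac{dq^2}{d\tau}$ in Example \ref{ex2}. Your additional care in restricting to the subinterval $(0,\bar T]$ where the monotonicity holds (reflected in the $\tau\in[0,\tau_0]$ restriction of the statement) and in treating the $\gamma=0$ case separately matches the intended argument.
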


We end this section with some open questions for further investigation. Consider the following stochastic heat equation studied in \cite{herrell2020sharp} $$\partial_t u(t,x)=\mathcal{L}u(t,x)+\dot{B}, u(0,x)=0, 0\leq t\leq T, x \in \mathbb{R}^d, $$
where $\mathcal{L}$ is the generator of a Lévy process, and B is a fractional colored noise with Hurst index $H\in(\frac{1}{2},1)$ in the time variable and spatial covariance function $f$ as in \cite{balan2008stochastic}.

Fix $t_0,M>0,$ according to \cite{herrell2020sharp}[Thm. 3.4, Rem 3.5] $u=\{u(t_0,x),x\in[-M,M]^d\}$ is a centered $q$-isotropic Gaussian process  with $$q(\tau)= \vert  \log r\vert^\beta r^{2(1 \wedge\theta)},\beta=\mathbbm{1}_{\theta =1}, $$
where $\theta$ is a positive parameter that depends on $d, H$, and $f$. Furthermore, if $\theta\leq 1$, $u$ satisfies (LND) since there exist a positive constant such that for any $x,x^1,...,x^n\in [-M,M]^d$,
\begin{equation}\label{2e2}
E(\Var(u(t_0,x)\mid u(t_0,x^1),...,u(t_0,x^n)))\geq c \bigwedge_{ j=1}^ n \vert x-x^j\vert^{2\theta}.
\end{equation}

As it is mentioned in \cite{herrell2020sharp}, an open problem is to establish optimal bounds for the conditional variance when $\theta=1$, since the lower bound in \eqref{2e2} is smaller than the value of the gauge function $q$ due to the appearance of a logarithmic term. 

It is expected that similar difficulties will arise from the study of the solution to the following linear stochastic partial differential equations: The Poisson equation driven by white noise \cite{sanz2018systems}[Lem. 5.5], \cite{hinojosa2022hitting}[Thm. 2.2], the bilinear heat equation driven by white noise \cite{hinojosa2022linear}[Prop. 3.2], and the generalized fractional kinetic equation driven by time fractional-noise \cite{sheng2022hitting}[Prop 3.2].

\section*{Acknowledgments}
The author wants to thank Marta Sanz-Solé who encouraged him for writing this paper.

\bibliography{mybibfile}

\end{document}